\newtheorem{thm}{Theorem}[section]
\newtheorem{corollary}[thm]{Corollary}
\newtheorem{lemma}[thm]{Lemma}
\newtheorem{proposition}[thm]{Proposition}
\theoremstyle{definition}
\newtheorem{definition}[thm]{Definition}
\newtheorem{example}[thm]{Example}
\newcommand{\field}[1]{\mathbb{#1}}
\newcommand{\Z }{\field{Z}}
\def\pd{{\rm pd}}
\def\id{{\rm id}}
\def\fd{{\rm fd}}
\def\Gpd{{\rm Gpd}}
\def\Gfd{{\rm Gfd}}
\def\Gid{{\rm Gid}}
\def\lgldim{{l\rm.gldim}}
\def\rgldim{{r\rm.gldim}}
\def\lGgldim{{l\rm.Ggldim}}
\def\rGgldim{{r\rm.Ggldim}}
\def\gldim{{\rm gldim}}
\def\Ggldim{{\rm Ggldim}}
\def\wdim{{\rm wgldim}}
\def\Im{{\rm Im}}
\def\Ker{{\rm Ker}}
\def\Ext{{\rm Ext}}
\def\Tor{{\rm Tor}}
\def\Hom{{\rm Hom}}
\begin{document}

%%%%%%%%%%%%%%%%%%%%%%%%%%%%%%%%%%%%%%%%%%%%%%%%%%%%%%%%%%%%%%%%%%%%%%%%%%%%%%%%%%%%%%%%%%%%%%%%%%%%%%%%%%%%%%%%%%%%%%%%%%%%%%%%
%%%%%%%%%%%%%%%%%%%%%%%%%%%%%%%%%%%%%%%%%%%%%%%%%%%%%%%%%%%%%%%%%%%%%%%%%%%%%%%%%%%%%%%%%%%%%%%%%%%%%%%%%%%%%%%%%%%%%%%%%%%%%%%%
%%%%%%%%%%%%%%%%%%%%%%%%%%%%%%%%%%%%%%%%%%%%%%%%%%%%%%%%%%%%%%%%%%%%%%%%%%%%%%%%%%%%%%%%%%%%%%%%%%%%%%%%%%%%%%%%%%%%%%%%%%%%%%%%
%%%%%%%%%%%%%%%%%%%%%%%%%%%%%%%%%%%%%%%%%%%%%%%%%%%%%%%%%%%%%%%%%%%%%%%%%%%%%%%%%%%%%%%%%%%%%%%%%%%%%%%%%%%%%%%%%%%%%%%%%%%%%%%%
\title[$(n,m)$-SG rings]{$(n,m)$-SG rings}

%%%AUTHOR ONE%%%%%%%%%%%
\author{Driss Bennis}
\address{Department of Mathematics, Faculty of Science and
Technology of Fez,\\ Box 2202, University S. M. Ben Abdellah, Fez,
Morocco} \email{driss\_bennis@hotmail.com}

%%%AUTHOR TWO%%%%%%%%%%%
%\author{}
%\address{}
%\email{}

%%%AUTHOR THREE%%%%%%%%%
%\author{}
%\address{}
%\email{}

%\thanks{}

%\date{\today}

\subjclass[2000]{16E05, 16E10, 16E30, 16E65}

\keywords{Gorenstein projective modules, Gorenstein projective
dimension, ($n$-)strongly Gorenstein projective modules,
$(n,m)$-strongly Gorenstein projective modules, Gorenstein global
dimension of rings, $(n,m)$-SG rings}

\begin{abstract} This paper is a continuation of the
paper Int. Electron. J. Algebra 6 (2009),  219--227. Namely, we
introduce and study a doubly filtered set of classes of  rings of
finite Gorenstein global dimension, which are called $(n,m)$-SG
for integers $n\geq 1$ and $m\geq 0$.  Examples of $(n,m)$-SG
rings, for $n=1\; and\; 2$ and every $m\geq 0$, are given.
\end{abstract}

\maketitle

%%%%%%%%%%%%%%%%%%%%%%%%%%%%%%%%%%%%%%%%%%%%%%%%%%%%%%%%%%%%%%%%%%%%%%%%%%%%%%%%%%%%%%%%%%%%%%%%%%%%%%%%%%%%%%%%%%%%%%%%%%%%%%%%
%%%%%%%%%%%%%%%%%%%%%%%%%%%%%%%%%%%%%%%%%%%%%%%%%%%%%%%%%%%%%%%%%%%%%%%%%%%%%%%%%%%%%%%%%%%%%%%%%%%%%%%%%%%%%%%%%%%%%%%%%%%%%%%%
%%%%%%%%%%%%%%%%%%%%%%%%%%%%%%%%%%%%%%%%%%%%%%%%%%%%%%%%%%%%%%%%%%%%%%%%%%%%%%%%%%%%%%%%%%%%%%%%%%%%%%%%%%%%%%%%%%%%%%%%%%%%%%%%
%%%%%%%%%%%%%%%%%%%%%%%%%%%%%%%%%%%%%%%%%%%%%%%%%%%%%%%%%%%%%%%%%%%%%%%%%%%%%%%%%%%%%%%%%%%%%%%%%%%%%%%%%%%%%%%%%%%%%%%%%%%%%%%%
\section{Introduction}

Throughout the paper all rings are associative with identity,  and
all modules are,
if not specified otherwise, left modules.\\
Let $R$ be a ring and let $M$ be an $R$-module.  For an
$R$-module $M$, we use $\pd_R(M),\ \id_R(M)$, and $\fd_R(M)$ to
denote, respectively, the classical projective, injective and flat
dimension of $M$.   We use $\lgldim(R)$ and $\rgldim(R)$ to
denote, respectively, the classical left and right global
dimension of $R$, and   $\wdim(R)$ to denote the weak global
dimension  of $R$.\bigskip

The Gorenstein homological dimensions theory originated in the
works of Auslander and Bridger  \cite{A1} and \cite{A2}, where
they introduced the G-dimension of any finitely generated module
$M$ and over any Noetherian ring $R$. The G-dimension is analogous
to the classical projective dimension and shares some of its
principal properties (see \cite{LW} for more details). However, to
complete the analogy an extension of the G-dimension to
non-necessarily finitely generated modules is needed. This is done
in \cite{GoIn, GoInPj}, where the Gorenstein projective dimension
was defined over arbitrary rings (as an extension of the
G-dimension to modules that are not necessarily finitely
generated), and the Gorenstein injective dimension was defined as
a dual notion of the Gorenstein projective dimension:

\begin{definition}\label{def1}
 Let $R$ be a ring. \begin{itemize}
    \item    An  $R$-module  $M$ is called \textit{Gorenstein projective}
(G-projective for short)  if there exists an exact sequence of
projective  $R$-modules,
$$\mathbf{P}=\ \cdots\rightarrow P_1\rightarrow P_0 \rightarrow
P_{ -1 }\rightarrow P_{-2 } \rightarrow\cdots,$$ such that  $M
\cong \Im(P_0 \rightarrow P_{ -1 })$ and such that $\Hom_R ( -, Q)
$ leaves the sequence $\mathbf{P}$ exact whenever $Q$ is a
projective  $R$-module.\\ The exact sequence $\mathbf{P}$ is
called a
\textit{complete projective resolution} of  $M$.\\
\indent For a positive integer $n$, we say that $M$ has
\textit{Gorenstein projective dimension} at most $n$, and we write
$\Gpd_R(M)\leq n$ (or simply $\Gpd(M)\leq n$), if there is an
exact sequence of  $R$-modules,
$$ 0 \rightarrow G_n\rightarrow \cdots \rightarrow G_0\rightarrow
M \rightarrow 0,$$ where each $G_i$ is G-projective.
\item  Dually, the \textit{Gorenstein injective} module
(G-injective for short) is defined, and so the \textit{Gorenstein
injective dimension}, $\Gid_R(M)\leq n$, of an $R$-module $M$ is
defined.\end{itemize}
\end{definition}

Also to complete the analogy with the classical homological
dimensions theory, the Gorenstein flat dimension was introduced in
\cite{GoPlat} as follows:

\begin{definition}\label{def2}
 Let $R$ be a ring. An $R$-module  $M$  is called \textit{Gorenstein flat} (G-flat for short)  if there
exists an exact sequence of flat   $R$-modules, $$ \mathbf{F}=\
 \cdots\rightarrow F_1\rightarrow F_0 \rightarrow F^0 \rightarrow
F^1 \rightarrow\cdots,$$ such that  $M \cong \Im(F_0 \rightarrow
F^0)$ and such that $I\otimes_R - $ leaves the sequence
$\mathbf{F}$  exact whenever $I$ is an injective right
$R$-module.\\ The exact sequence $\mathbf{F}$ is called a
\textit{complete flat resolution} of $M$.\\
\indent For a positive integer $n$, we say that $M$ has
\textit{Gorenstein flat dimension} at most $n$, and we write
$\Gfd_R(M)\leq n$, if there is an exact sequence of $R$-modules,
$$ 0 \rightarrow G_n\rightarrow \cdots \rightarrow G_0\rightarrow
M \rightarrow 0,$$ where each $G_i$ is G-flat.
\end{definition}

\indent The Gorenstein homological dimensions have been
extensively studied by many others, who proved that these
dimensions share  many nice properties of the classical
homological dimensions (see for instance \cite{LW,CFoxH,
Rel-hom}). Recently, in  \cite{B5}, a  particular case of modules
of finite Gorenstein projective dimension  is introduced as
follows:

\begin{definition}\label{DefnSGproj} Let $R$ be a ring and let $n\geq 1$ and $m\geq 0$ be
integers. An  $R$-module $M$ is called \textit{$(n,m)$-strongly
Gorenstein projective}    ($(n,m)$-SG-projective for short) if
there exists an exact sequence of $R$-modules, $$0\rightarrow
M\rightarrow Q_n\rightarrow\cdots\rightarrow Q_1 \rightarrow
M\rightarrow 0,$$ where $\pd_R(Q_i)\leq m$  for $1\leq i\leq n $,
such that $ \Ext^i_R(M,Q ) = 0 $ for every $i> m$   and  every
projective $R$-module $Q$.
\end{definition}

The $(1,0)$-SG-projective modules are already investigated in
\cite{BM}  (see \cite[Proposition 2.9]{BM}). They are called
strongly Gorenstein projective modules (SG-projective modules for
short) (see also \cite{LY} and \cite{GZ}). In  \cite{BM2},
$(n,0)$-SG-projective modules are first studied, and they are
called $n$-strongly Gorenstein projective modules
($n$-SG-projective modules for short). In general,
$(n,m)$-SG-projective modules are a particular case of modules
with Gorenstein projective dimension at most $m$ \cite[Theorem
2.4]{B5}. The $(1,m)$-SG-projective modules are served to
characterize  modules of Gorenstein projective dimension at most
$m$ in a similar way to the way SG-projective modules characterize
G-projective modules (see \cite[Corollary 2.8]{B5} and
\cite[Theorem 2.7]{BM}). Namely, we have that a module  $M$  has
Gorenstein projective dimension at most  a positive integer $m$ if
and only if $M$ is a direct summand of  a $(1,m)$-SG-projective
module. As mentioned at the end of the paper \cite{B5}, dually the
$(n,m)$-SG-injective modules  are defined.\bigskip

In this paper, we continue the investigation of
$(n,m)$-SG-projective and  $(n,m)$-SG-injective modules. Namely,
we are interested in studying rings over which all modules are
$(n,m)$-SG-projective (resp., $(n,m)$-SG-injective). First, we
show, for a ring $R$, that the assertions ``all $R$-modules are
$(n,m)$-SG-projective" and ``all $R$-modules are
$(n,m)$-SG-injective" are equivalent  (Proposition
\ref{prop-main}). A ring that satisfies one of these equivalent
assertions is called left $(n,m)$-SG (Definition \ref{Def-nmSG}).
In the main result of this paper (Theorem \ref{thm-main}), left
$(n,m)$-SG rings are characterized in terms of left Gorenstein
global dimension: the left Gorenstein global dimension of a ring
$R$, $\lgldim(R)$, is defined in \cite{BM3} as  the common value
of the equal quantities \cite[Theorem 1.1]{BM3}:
  $$\sup\{\Gpd_R(M)\,|\,M\;\mathrm{is\;an}\;R\!-\!\mathrm{module}\}  =
\sup\{\Gid_R(M)\,|\,M\;\mathrm{is\;an}\;R\!-\!\mathrm{module}\}.$$
Namely, after giving  a characterization of $(n,m)$-SG-projective
(resp., $(n,m)$-SG-injective) modules (Lemmas \ref{lem-chara-GP}
and \ref{lem-chara-GI}), we show, in Theorem \ref{thm-main},  that
the $(n,m)$-SG rings are particular cases of rings with left
Gorenstein global dimension at most $m$. Then, for Noetherian
rings, $(n,m)$-SG rings are particular cases of $m$-Gorenstein
rings (see Theorem \ref{thm-Noe} and Corollary \ref{cor-nm-Noe}).
So the $(n,0)$-SG rings are   particular examples of the
well-known quasi-Frobenius rings (see   Corollary
\ref{cor-n0-Noe}). In particular, $(1,0)$-SG commutative rings are
investigated in \cite{BMO}, and they are called SG-semisimple. It
is proved that a local $(1,0)$-SG commutative ring is just a ring
with only one non-trivial ideal \cite[Theorem 3.7]{BMO}.\bigskip

After investigating some relationships between $(n,m)$-SG rings
(Proposition \ref{prop-relations}),   the remain of the paper is
devoted to establish examples of $(n,m)$-SG rings. For that, we
study  the notion of $(n,m)$-SG rings in direct product of rings,
such that we prove (Proposition \ref{prop-prod}):  \begin{center}
A  direct product of rings $\displaystyle\prod_{i=1}^n R_i$  is
left $(n,m)$-SG  if and only if each $R_i$ is left
$(n,m)$-SG.\end{center}
 Then, a family of left $(1,i)$-SG rings
which are not left $(1,i-1)$-SG, for every $i\geq 1$, are given
(Example \ref{exm-1}). In Proposition \ref{prop-2m}, we show that
if $R$ is a commutative ring with $\gldim(R )=m$ for an integer
$m\geq 1$, such that $R$ contains a non-zero divisor element $x$,
then the quotient ring $ R /xR $ is $(2,m-1)$-SG. As a
consequence, we give examples of left $(2,0)$-SG rings which are
not left $(1,0)$-SG; and, for $i\geq 1$, we construct examples of
left $(2,i)$-SG rings which are neither  left $(1,i )$-SG nor
$(2,i-1)$-SG.

%%%%%%%%%%%%%%%%%%%%%%%%%%%%%%%%%%%%%%%%%%%%%%%%%%%%%%%%%%%%%%%%%%%%%%%%%%%%%%%%%%%%%%%%%%%%%%%%%%%%%%%%%%%%%%%%%%%%%%%%%%%%%%%%
%%%%%%%%%%%%%%%%%%%%%%%%%%%%%%%%%%%%%%%%%%%%%%%%%%%%%%%%%%%%%%%%%%%%%%%%%%%%%%%%%%%%%%%%%%%%%%%%%%%%%%%%%%%%%%%%%%%%%%%%%%%%%%%%
%%%%%%%%%%%%%%%%%%%%%%%%%%%%%%%%%%%%%%%%%%%%%%%%%%%%%%%%%%%%%%%%%%%%%%%%%%%%%%%%%%%%%%%%%%%%%%%%%%%%%%%%%%%%%%%%%%%%%%%%%%%%%%%%
%%%%%%%%%%%%%%%%%%%%%%%%%%%%%%%%%%%%%%%%%%%%%%%%%%%%%%%%%%%%%%%%%%%%%%%%%%%%%%%%%%%%%%%%%%%%%%%%%%%%%%%%%%%%%%%%%%%%%%%%%%%%%%%%
 %%%%%%%%%%%%%%%%%%%%%%%%%%%%%%%%%%%%%%%%%%%%%%%%%%%%%%%%%%%%%%%%%%%%%%%%%%%%%%%%%%%%%%%%%%%%%%%%%%%%%%%%%%%%%%%%%%
%%%%%%%%%%%%%%%%%%%%%%%%%%%%%%%%%%%%%%%%%%%%%%%%%%%%%%%%%%%%%%%%%%%%%%%%%%%%%%%%%%%%%%%%%%%%%%%%%%%%%%%%%%%%%%%%%%%%%%%%%%%%%%%%
%%%%%%%%%%%%%%%%%%%%%%%%%%%%%%%%%%%%%%%%%%%%%%%%%%%%%%%%%%%%%%%%%%%%%%%%%%%%%%%%%%%%%%%%%%%%%%%%%%%%%%%%%%%%%%%%%%%%%%%%%%%%%%%%
%%%%%%%%%%%%%%%%%%%%%%%%%%%%%%%%%%%%%%%%%%%%%%%%%%%%%%%%%%%%%%%%%%%%%%%%%%%%%%%%%%%%%%%%%%%%%%%%%%%%%%%%%%%%%%%%%%%%%%%%%%%%%%%%
\section{Main results}

We start with the following result:

\begin{proposition}\label{prop-main}  Let $n\geq 1$ and $m\geq 0$ be
integers. For  a   ring  $R$,  the following assertions are
equivalent:
\begin{enumerate}
    \item Every $R$-module is $(n,m)$-SG-projective;
    \item Every  $R$-module is $(n,m)$-SG-injective.
\end{enumerate}
\end{proposition}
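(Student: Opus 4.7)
The plan is to prove $(1)\Rightarrow(2)$; the reverse implication $(2)\Rightarrow(1)$ follows by a completely dual argument. The strategy is to reduce both halves of the $(n,m)$-SG-injective condition to consequences of $\lGgldim(R)\le m$, using the cyclic exact sequence that $M$ already enjoys by hypothesis.

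Assume every $R$-module is $(n,m)$-SG-projective. By \cite[Theorem 2.4]{B5}, every $R$-module $M$ has $\Gpd_R(M)\le m$, so $\lGgldim(R)\le m$, and then \cite[Theorem 1.1]{BM3} yields $\Gid_R(M)\le m$ for every $R$-module $M$. Fix such an $M$. The Ext-vanishing part of $(n,m)$-SG-injectivity, namely $\Ext^i_R(E,M)=0$ for all $i>m$ and every injective $R$-module $E$, is immediate from $\Gid_R(M)\le m$ via the standard dimension shift through an injective coresolution of $M$ whose $m$-th cosyzygy is Gorenstein injective. It remains to produce the required cyclic exact sequence.

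For this, I apply the hypothesis to $M$ itself to obtain
$$0\to M\to Q_n\to\cdots\to Q_1\to M\to 0,\qquad \pd_R(Q_i)\le m.$$
The plan is to show that, under $\lGgldim(R)\le m$, each module $Q$ with $\pd_R(Q)\le m$ automatically satisfies $\id_R(Q)\le m$; granting this, a mere reindexing $E_i:=Q_{n-i+1}$ furnishes the cyclic exact sequence $0\to M\to E_1\to\cdots\to E_n\to M\to 0$ with $\id_R(E_i)\le m$, which together with the already-established Ext-vanishing shows that $M$ is $(n,m)$-SG-injective.

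The main obstacle is the implication ``$\pd_R(Q)\le m \Rightarrow \id_R(Q)\le m$'' under the standing hypothesis $\lGgldim(R)\le m$. I plan to establish it through the Gorenstein-theoretic symmetry between modules of finite projective and finite injective dimension over rings of finite Gorenstein global dimension: finiteness of $\pd_R(Q)$ forces finiteness of $\id_R(Q)$, and then the equality $\id_R(Q)=\Gid_R(Q)$ combined with $\Gid_R(Q)\le m$ delivers the required bound. The implication $(2)\Rightarrow(1)$ is then obtained by the mirror argument, converting the injective cyclic sequence for $M$ into a projective one via the dual implication $\id_R(Q)\le m \Rightarrow \pd_R(Q)\le m$.
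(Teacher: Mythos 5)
Your proposal is correct, but it takes a genuinely different route from the paper's. The paper argues directly from the definition: since \emph{every} module $N$ is $(n,m)$-SG-projective, $\Ext^i_R(N,P)=0$ for all $i>m$ and every $P$ of finite projective dimension (via \cite[Proposition 2.3]{HH}), and because this holds for all $N$ simultaneously it yields $\id_R(P)\leq m$; dually, an injective module $J$, being itself $(n,m)$-SG-projective, embeds in --- and hence splits off from --- a module of projective dimension at most $m$, so $\pd_R(J)\leq m$. These two elementary observations convert the given cyclic sequence into the required one and give the Ext-vanishing at once. You instead pass through the Gorenstein global dimension machinery: \cite[Theorem 2.4]{B5} forces $\lGgldim(R)\leq m$, \cite[Theorem 1.1]{BM3} then gives $\Gid_R(-)\leq m$ everywhere, the Ext-vanishing half follows from Holm's functorial description of finite Gorenstein injective dimension, and the bound $\id_R(Q_i)\leq m$ follows because a module of finite projective dimension has $\id_R=\Gid_R$ --- this is precisely \cite[Theorem 2.1]{HH1} (which the paper itself invokes later, in Theorem \ref{thm-Noe}), and you should cite it explicitly rather than gesture at ``Gorenstein-theoretic symmetry'', since the claim that finite $\pd_R$ forces finite $\id_R$ is exactly the nontrivial content of that theorem. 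The trade-off: your route imports three external theorems but makes the structural reason transparent and essentially anticipates Theorem \ref{thm-main}, whereas the paper's argument is self-contained, more elementary, and keeps the proposition independent of the Gorenstein global dimension theory.
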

\begin{proof} We prove only the implication $(1) \Rightarrow (2)$. The
implication $(2) \Rightarrow (1)$ has a dual proof.\\
 First, using \cite[Proposition 2.3]{HH}, note that
every $R$-module $P$ with finite projective dimension has
injective dimension at most $m$. Indeed, $ \Ext^i(N,P) = 0 $ for
any $i> m$ and every $R$-module $N$ (since, by hypothesis, $N$ is
$(n,m)$-SG-projective). Also, note that every injective $R$-module
$I$ has projective dimension at most $m$. In fact, as
$(n,m)$-SG-projective, $I$ embeds in an $R$-module with projective
dimension  at most $m$, and since $I$ is injective it is a direct summand of a such $R$-module.\\
Now, consider an $R$-module $M$. Since $M$ is
$(n,m)$-SG-projective, there exists an exact sequence of
$R$-modules,
$$0\rightarrow M\rightarrow Q_n\rightarrow\cdots\rightarrow Q_1
\rightarrow M\rightarrow 0,$$ where $\pd_R(Q_i)\leq m$. By the
reason above $\id_R(Q_i)\leq m$; and also $ \Ext^i_R(J,M ) = 0 $
for any $i> m$ and   every injective $R$-module $J$ (since, by the
reason above, $\pd_R(J )\leq m$). Therefore, $M$ is
$(n,m)$-SG-injective.\end{proof}

\begin{definition}\label{Def-nmSG} A ring $R$ is called
left (resp., right) $(n,m)$-SG  for some integers  $n\geq 1$ and
$m\geq 0$, if $R$ satisfies one of the equivalent conditions of
Proposition  \ref{prop-main} for left  (resp., right) $R$-modules.
If $R$ is both left and right  $(n,m)$-SG, we simply say  that it
is $(n,m)$-SG.
\end{definition}

Later, we give examples of $(n,m)$-SG rings. Now, we set the main
result of this paper which gives a characterization of $(n,m)$-SG
rings in terms of left Gorenstein global dimension (for a
background on left Gorenstein global dimension, see
\cite{BM6,BM3,BM4}). For that, we need the following key lemma
which gives a characterization of $(n,m)$-SG-projective modules
(see also its $(n,m)$-SG-injective version, Lemma
\ref{lem-chara-GI}). This result is a generalization of
\cite[Theorem 2.7]{B5} and so it gives an affirmative answer to
the question concerning the converse of \cite[Theorem 2.4]{B5}
(see the note before \cite[Lemma 2.5]{B5}).\bigskip

Recall, for a projective resolution of a module $M$, $$ \cdots
\rightarrow P_1 \rightarrow P_0\rightarrow M\rightarrow 0,$$ that
the module $K_i=\Im( P_i \rightarrow P_{i-1})$  for $i\geq 1$, is
called an $i^{th}$ syzygy of  $M$.

\begin{lemma} \label{lem-chara-GP} Let $R$ be a ring  and let $n\geq 1$ and $m\geq 0$ be
integers. For  an $R$-module $M$  the following assertions are
equivalent:
\begin{enumerate}
    \item $M$ is $(n,m)$-SG-projective;
    \item $ \Gpd_R(M) \leq m $ and an $m^{th}$ syzygy of   $M$ is $(n,0)$-SG-projective.
    \item There exists a short exact sequence of  $R$-modules, $0\rightarrow P  \rightarrow G \rightarrow   M
    \rightarrow  0$, where $G$ is $(n,0)$-SG-projective    and
    $\pd_R(P) \leq m-1$;
    \item There exists a short exact sequence of  $R$-modules, $0\rightarrow M \rightarrow Q
    \rightarrow H \rightarrow  0$, where $H$ is $(n,0)$-SG-projective and
    $\pd_R(Q) \leq m $.
\end{enumerate}
\end{lemma}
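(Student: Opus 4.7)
The plan is to prove the equivalence via the cycle $(1)\Rightarrow(2)\Rightarrow(3)\Rightarrow(4)\Rightarrow(1)$. For $(1)\Rightarrow(2)$, an appeal to \cite[Theorem 2.4]{B5} immediately gives $\Gpd_R(M)\le m$. To upgrade this to the claim that some $m$-th syzygy of $M$ is $(n,0)$-SG-projective, I would decompose the cyclic sequence of (1) into short exact sequences $0\to X_i\to Q_i\to X_{i-1}\to 0$ (with $X_0=X_n=M$) and iteratively apply the horseshoe lemma along compatible projective resolutions of length at most $m$ for each $Q_i$. Passing to $m$-th syzygies, the contributions from the $Q_i$'s become projective (as $\pd_R(Q_i)\le m$) and the cyclic structure descends to produce a cyclic sequence of projectives around an $m$-th syzygy $K$ of $M$; the vanishing $\Ext^{\ge 1}_R(K,N)=0$ for projective $N$ follows by dimension shifting from the hypothesis $\Ext^{>m}_R(M,Q)=0$.

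For $(2)\Rightarrow(3)$, I would proceed by induction on $m$, the base case $m=0$ being immediate with $P=0$ and $G=M$. For the inductive step, pick a short exact sequence $0\to N\to P_0\to M\to 0$ with $P_0$ projective; then $N$ satisfies (2) with parameter $m-1$, so inductively there is $0\to P_N\to G_N\to N\to 0$ with $G_N$ being $(n,0)$-SG-projective and $\pd_R(P_N)\le m-2$. Using the first short exact sequence $0\to G_N\to L\to C\to 0$ extracted from the cyclic sequence of $G_N$, first form the pushout of $G_N\hookrightarrow L$ and $G_N\twoheadrightarrow N$ to obtain $A\cong L/P_N$ fitting in $0\to N\to A\to C\to 0$ (with $\pd_R(A)\le m-1$, from $0\to P_N\to L\to A\to 0$); then form the pushout of $N\hookrightarrow A$ and $N\hookrightarrow P_0$ to obtain $\tilde G$ fitting in $0\to A\to \tilde G\to M\to 0$ and $0\to P_0\to \tilde G\to C\to 0$. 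Since $C$ is Gorenstein projective and $P_0$ is projective, the second sequence splits, so $\tilde G\cong P_0\oplus C$ is $(n,0)$-SG-projective, establishing (3) with $P=A$ and $G=\tilde G$.

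The step $(3)\Rightarrow(4)$ is a single pushout: given $0\to P\to G\to M\to 0$, extract the first short exact sequence $0\to G\to L_n\to C\to 0$ from $G$'s cyclic sequence and form the pushout of $G\hookrightarrow L_n$ and $G\twoheadrightarrow M$. This produces $Q\cong L_n/P$ fitting in $0\to P\to L_n\to Q\to 0$ (so $\pd_R(Q)\le m$) and in $0\to M\to Q\to C\to 0$ with $C$ being $(n,0)$-SG-projective, which is (4) with $H=C$.

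The main obstacle is the closing implication $(4)\Rightarrow(1)$. The approach is to splice the given short exact sequence with the full cyclic sequence of $H$ to produce the length-$(n+2)$ exact sequence
\[
0\to M\to Q\to L_n\to L_{n-1}\to\cdots\to L_1\to H\to 0,
\]
and then close the loop back at $M$ by constructing a terminal module $Q_1$ of projective dimension at most $m$ surjecting onto $M$, obtained as a suitable modification of the pullback $L_1\times_H Q$. The delicate point is arranging that the image of the incoming map from $L_2$ coincides with the kernel of $Q_1\twoheadrightarrow M$; this matching condition must be achieved while preserving the length-$n$ cyclic form and the bound $\pd_R(Q_i)\le m$ on every middle term, which may require adding a projective correction term constructed from a projective resolution of the discrepancy, in the spirit of the splitting argument in $(2)\Rightarrow(3)$. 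The required $\Ext$-vanishing $\Ext^{>m}_R(M,Q)=0$ for projective $Q$ is automatic once $\Gpd_R(M)\le m$ is known, and the latter follows from (4) since $H$ is Gorenstein projective and $\pd_R(Q)\le m$.
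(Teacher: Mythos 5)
Your chain $(1)\Rightarrow(2)\Rightarrow(3)\Rightarrow(4)$ is essentially sound, though $(2)\Rightarrow(3)$ takes a different route from the paper: you induct on $m$ with two pushouts and the splitting $\Ext^1_R(C,P_0)=0$, whereas the paper forms the mapping cone of a comparison map between the chosen projective resolution of $M$ and a right half of a complete projective resolution of its $(n,0)$-SG-projective syzygy, obtaining $0\to P\to P_0\oplus N\to M\to 0$ in one stroke. Both work; in your induction you should take $P_0$ to be the first term of the resolution witnessing $(2)$, so that the $(m-1)^{st}$ syzygy of $N$ is literally the given $(n,0)$-SG-projective syzygy of $M$ --- otherwise you only control it up to projective direct summands, and the class of $(n,0)$-SG-projective modules is not known to be closed under removing such summands. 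Your $(3)\Rightarrow(4)$ is the same pushout as in the paper.

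The genuine gap is $(4)\Rightarrow(1)$, as you acknowledge. Splicing $0\to M\to Q\to H\to 0$ onto the cyclic sequence of $H$ gives $n+1$ middle terms and still terminates at $H$, and the proposed repair --- ``a suitable modification of the pullback $L_1\times_H Q$'' --- does not exist as described: that pullback sits in $0\to M\to L_1\times_H Q\to L_1\to 0$ and $0\to H_2\to L_1\times_H Q\to Q\to 0$, and neither sequence is a surjection onto $M$ whose kernel can be matched with the incoming image from $L_2$. The paper's device is the idea you are missing: decompose the cyclic sequence of $H$ into $0\to H_{i+1}\to Q_i\to H_i\to 0$ (with $H_1=H_{n+1}=H$) and choose, for \emph{every} index, an auxiliary short exact sequence $(\alpha_i):0\to G_i\to F_i\to H_i\to 0$, with $F_i$ projective and $G_i$ G-projective for $2\le i\le n$, and with $(\alpha_1)=(\alpha_{n+1})$ equal to the given sequence $0\to M\to Q\to H\to 0$. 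Since $\Ext^1_R(H_i,L)=0$ for every module $L$ of finite projective dimension, a Horseshoe-type argument applies to each $0\to H_{i+1}\to Q_i\to H_i\to 0$ even though $Q$ is not projective, yielding sequences $0\to G_{i+1}\to L_i\to G_i\to 0$ with $L_i$ controlled by $F_{i+1}\oplus F_i$; because $G_1=G_{n+1}=M$, splicing these $n$ sequences produces a cyclic sequence with exactly $n$ middle terms, the two outer ones of projective dimension at most $m$ and the inner ones projective. In short, one must replace every cosyzygy $H_i$ by a chosen $G_i$ simultaneously, rather than patch only the two ends after splicing; without this, the closing implication is not proved.
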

\begin{proof}    $(1)\Rightarrow (2).$ Follows from \cite[Theorem
2.4]{B5}.\\
$(2)\Rightarrow (3).$  A similar proof to the one of \cite[Theorem
2.4(3)]{B5} shows that  any $k^{th}$ syzygy of   $M$ is
$(n,0)$-SG-projective, where $k= \Gpd_R(M) \leq m $. Then, we have
an exact sequence  of $R$-modules,
$$0\rightarrow G_k\rightarrow P_{k-1}\rightarrow\cdots\rightarrow
P_0 \rightarrow M\rightarrow 0,$$ where $P_i $  are projective and
$G_k$ is $(n,0)$-SG-projective. Consider a right half of a
complete projective resolution of $G_k$, $$0\rightarrow
G_k\rightarrow Q_{k-1}\rightarrow\cdots\rightarrow Q_0 \rightarrow
N\rightarrow 0,$$  where $Q_i $  are projective and, by
\cite[Lemma 1.2(2)]{B5},  $N$ is $(n,0)$-SG-projective. Then, from
\cite[Proposition 1.8]{HH}, we get the following commutative
diagram:
$$\begin{array}{ccccccccccccc}
   0&\rightarrow &G_k&\rightarrow &Q_{k-1}&\rightarrow&\cdots&\rightarrow &Q_0& \rightarrow &N&\rightarrow &0 \\
    &  &\parallel&  &\downarrow& & &  &\downarrow&   &\downarrow&  & \\
   0&\rightarrow & G_k&\rightarrow &P_{k-1} &\rightarrow&\cdots&\rightarrow &  P_0& \rightarrow &M&\rightarrow &0 \\
\end{array}$$
This diagram gives a chain map between complexes,
$$\begin{array}{ccccccccccc}
   0 & \rightarrow &Q_{k-1}&\rightarrow&\cdots&\rightarrow &Q_0& \rightarrow &N&\rightarrow &0 \\
    &  &   \downarrow& & &  &\downarrow&   &\downarrow&  & \\
   0&   \rightarrow &P_{k-1} &\rightarrow&\cdots&\rightarrow &  P_0& \rightarrow &M&\rightarrow &0 \\
\end{array}$$
which induces an isomorphism in homology. Then, its mapping cone
is exact (see \cite[Section 1.5]{Wei}). That is, the following
exact sequence:
$$0\rightarrow Q_{k-1} \rightarrow P_{k-1 }\oplus Q_{k-2} \rightarrow\cdots
\rightarrow P_{1}\oplus Q_{0}\rightarrow  P_{0}\oplus N\rightarrow
M\rightarrow 0 $$ Therefore, The sequence, $0\rightarrow P
\rightarrow G \rightarrow   M
    \rightarrow  0$, where $G=P_{0}\oplus N$ and $P=\Ker( P_{0}\oplus N\rightarrow
M)$, is the desired sequence.\\
$(3)\Rightarrow (4).$ Since  $G$ is $(n,0)$-SG-projective, there
exists a short exact sequence of  $R$-modules, $0\rightarrow G
\rightarrow F
    \rightarrow H \rightarrow  0$, where $F $ is projective and $H$ is
    $(n,0)$-SG-projective. Then, with the sequence $0\rightarrow P  \rightarrow G \rightarrow   M
    \rightarrow  0$ we get the following pushout  diagram:
    $$ \xymatrix{
     &  0 \ar[d]  & 0 \ar[d]  &   &  \\
 &  P\ar[d] \ar@{=}[r]&  P\ar[d]  &   &  \\
 0\ar[r]& G \ar[d] \ar[r] & F\ar@{-->}[d] \ar[r] & H\ar@{=}[d]  \ar[r] & 0\\
0\ar[r]&M \ar@{-->}[r] \ar[d]& Q\ar[d] \ar[r]& H\ar[r] & 0\\
 & 0 &0  &   & }$$
From the middle exact sequence, $\pd_R(Q)=\pd_R(P)+1\leq m$.
Therefore, the bottom sequence is the desired short exact
sequence.\\
$(4)\Rightarrow (1).$ First, using  the short exact sequence
$0\rightarrow M \rightarrow Q        \rightarrow H \rightarrow 0$,
one can show   that  $ \Ext^i_R(M,L ) = 0 $ for every $i> m$ and
every projective  $R$-module $L$. Then, it remains, by definition,
to prove the existence of an exact sequence of the form:
$$0\rightarrow M\rightarrow L_n\rightarrow\cdots\rightarrow L_1
\rightarrow M\rightarrow 0, $$ where every $L_i$ has projective
dimension at
most $m$.\\
Since $H$ is $(n,0)$-SG-projective, there exists an exact sequence
of modules:
$$0\rightarrow H\rightarrow Q_n\rightarrow\cdots\rightarrow Q_1
\rightarrow H\rightarrow 0 $$ Decomposing this   sequence into
short exact sequences
$$0\rightarrow H_{i+1}\rightarrow      Q_i\rightarrow
H_{i }\rightarrow 0,$$ where $H_{n+1}= H=H_1$. And consider the
following family of short exact sequences
$$(\alpha_i)\qquad 0\rightarrow G_{i } \rightarrow F_{i}\rightarrow
H_{i}\rightarrow 0,$$ where, for $i=2,...,n$,  $P_{i }$ is
projective  and $G_i$ is G-projective, and, for $i= 1\
\mathrm{and}\ n$,  the short exact sequence  $(\alpha_i)$ is the
sequence  $0\rightarrow M \rightarrow Q \rightarrow H \rightarrow
0$. Then, since $\Ext^1_R (H_i,L)=0 $ for every $R$-module $L$
with finite projective dimension \cite[Proposition 2.3]{HH}, we
get, from  the Horseshoe Lemma (a dual version of \cite[Lemma
1.7]{HH}), the following family of  commutative diagrams
$(\beta_i)$:
$$\begin{array}{cccccccccc}
   &  &0&  & 0&   &0&  &  \\
 &  &\downarrow&  &\downarrow&   &\downarrow&  & \\
   0&\rightarrow &H_{i+1}& \rightarrow & Q_i& \rightarrow   &H_i&\rightarrow &0 \\
 &  &\downarrow&  &\downarrow&   &\downarrow&  & \\
   0&\rightarrow &F_{i+1}& \rightarrow & F_{i+1}\oplus F_i& \rightarrow   &F_i&\rightarrow &0 \\
              &  &\downarrow&  &\downarrow&   &\downarrow&  & \\
 0&\rightarrow &G_{i+1}& \rightarrow & L_i& \rightarrow   &G_i&\rightarrow &0 \\
                &  &\downarrow&  &\downarrow&   &\downarrow&  & \\
               &  &0&  &0&   &0&  &
\end{array}$$
Then, we obtain a family of short exact sequences $(\theta_i)$:
$$\begin{array}{ll}
 (\theta_1):&\quad 0\rightarrow G_2\rightarrow L_1\rightarrow M \rightarrow 0,\\
 (\theta_i):&\quad 0\rightarrow  G_{i+1 } \rightarrow L_i\rightarrow G_i
\rightarrow
  0
,\ \mathrm{for} \ i=2,...,n-1\ \mathrm{and}  \\
 (\theta_n):&\quad0 \rightarrow  M\rightarrow L_n \rightarrow G_n\rightarrow 0 \\
\end{array}  $$
such that, from the middle sequences of the commutative diagrams
$(\beta_i)$, $L_i$ is projective for $ i=2,...,n-1$,
$\pd_R(L_1)\leq m$, and $\pd_R(L_n)\leq m$. Therefore, we get the
desired sequence by assembling the short exact sequences
$(\theta_i)$.\end{proof}

Also, one can prove the following dual version of Lemma
\ref{lem-chara-GP}. Recall, for an injective resolution of a
module $M$, $$0 \rightarrow M\rightarrow I_0\rightarrow  I_1
\rightarrow \cdots ,$$ that the module $K_i=\Im( P_{i-1}
\rightarrow P_{i })$  for $i\geq 1$, is called an $i^{th}$
cosyzygy of  $M$.

\begin{lemma} \label{lem-chara-GI} Let $R$ be a ring  and let $n\geq 1$ and $m\geq 0$ be
integers. For  an $R$-module $M$  the following assertions are
equivalent:
\begin{enumerate}
    \item $M$ is $(n,m)$-SG-injective;
    \item $ \Gid_R(M) \leq m $ and an $m^{th}$ cosyzygy of   $M$ is
    $(n,0)$-SG-injective;
    \item There exists a short exact sequence of  $R$-modules, $0\rightarrow M    \rightarrow G \rightarrow
    I   \rightarrow  0$, where $G$ is $(n,0)$-SG-injective   and
    $\id_R(I) \leq m-1$;
    \item There exists a short exact sequence of  $R$-modules, $0\rightarrow H \rightarrow
    J \rightarrow M\rightarrow  0$, where $H$ is $(n,0)$-SG-injective and
    $\id_R(J)  \leq m $.
\end{enumerate}
\end{lemma}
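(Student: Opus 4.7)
The plan is to carry out the categorically dual argument to Lemma \ref{lem-chara-GP}, systematically replacing projectives by injectives, syzygies by cosyzygies, pushouts by pullbacks, and the Horseshoe Lemma by its dual. Every short exact sequence in the original proof is reversed in direction. Implication $(1) \Rightarrow (2)$ follows from the $(n,m)$-SG-injective analogue of \cite[Theorem 2.4]{B5}.

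For $(2) \Rightarrow (3)$, setting $k = \Gid_R(M) \leq m$, I would choose a partial injective coresolution
\[
0 \to M \to I^0 \to I^1 \to \cdots \to I^{k-1} \to G^k \to 0
\]
with $G^k$ an $(n,0)$-SG-injective $k^{th}$ cosyzygy, then take a left half of a complete injective resolution $0 \to N \to J^0 \to \cdots \to J^{k-1} \to G^k \to 0$ with $N$ again $(n,0)$-SG-injective (dual of \cite[Lemma 1.2(2)]{B5}), apply the dual of \cite[Proposition 1.8]{HH} to extend the identity on $G^k$ to a chain map, and read off the dual mapping cone sequence. Its terminal short exact piece is $0 \to M \to G \to I \to 0$ with $G = I^0 \oplus N$ being $(n,0)$-SG-injective and $\id_R(I) \leq m-1$. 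For $(3) \Rightarrow (4)$, the $(n,0)$-SG-injectivity of $G$ provides a sequence $0 \to H \to J \to G \to 0$ with $J$ injective and $H$ being $(n,0)$-SG-injective; forming the pullback $J' = J \times_G M$ yields the two short exact sequences
\[
0 \to J' \to J \to I \to 0 \quad \text{and} \quad 0 \to H \to J' \to M \to 0,
\]
the first of which forces $\id_R(J') \leq m$.

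For $(4) \Rightarrow (1)$, the hypothesis sequence $0 \to H \to J \to M \to 0$ gives $\Ext^i_R(L, M) = 0$ for every $i > m$ and every injective $R$-module $L$ by the corresponding vanishing for $H$. Then I would decompose the cyclic defining sequence for $H$ into short exact sequences $0 \to H_{i+1} \to I_i \to H_i \to 0$, couple each with an auxiliary sequence via the dual Horseshoe Lemma, taking the hypothesis sequence at both boundary couplings and sequences of the form $0 \to G_i \to F_i \to H_i \to 0$ with $F_i$ injective and $G_i$ G-injective at the interior couplings, and glue the resulting middle rows to obtain $0 \to M \to L_1 \to \cdots \to L_n \to M \to 0$ with $\id_R(L_i) \leq m$. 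The main obstacle, exactly as in the projective case, is the endpoint bookkeeping: arranging that precisely $L_1$ and $L_n$ inherit the bound $m$ while the intermediate $L_i$ remain injective, together with the diagram chase verifying that the assembled cyclic sequence is exact.
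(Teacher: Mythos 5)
Your proposal is correct and is exactly the paper's intended argument: the paper offers no separate proof of Lemma \ref{lem-chara-GI}, stating only that it is the dual of Lemma \ref{lem-chara-GP}, and you have carried out that dualization faithfully (cosyzygies for syzygies, the left half of a complete injective resolution, the dual mapping cone, a pullback in place of the pushout, and the dual Horseshoe Lemma). The only cosmetic slip is calling the piece $0\rightarrow M\rightarrow G\rightarrow I\rightarrow 0$ the ``terminal'' rather than the initial piece of the mapping-cone sequence, and the endpoint bookkeeping you flag in $(4)\Rightarrow(1)$ is no rougher than in the paper's own proof of the projective version.
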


Now, we can prove our  main result:

\begin{thm}\label{thm-main}  Let $n\geq 1$ and $m\geq 0$ be
integers. For  a   ring  $R$,  the following assertions are
equivalent:
\begin{enumerate}
    \item   $R$ is $(n,m)$-SG;
    \item $ \Ggldim(R) \leq m $ and  every G-projective $R$-module is
    $(n,0)$-SG-projective;
    \item $ \Ggldim(R) \leq m $ and  every G-injective  $R$-module is
    $(n,0)$-SG-injective.
\end{enumerate}
\end{thm}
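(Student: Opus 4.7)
The plan is to prove $(1) \Leftrightarrow (2)$ in detail; then $(1) \Leftrightarrow (3)$ follows by a verbatim dual argument, since Lemma~\ref{lem-chara-GI} mirrors Lemma~\ref{lem-chara-GP} and, by Proposition~\ref{prop-main}, condition (1) admits equivalent projective and injective formulations. So I will concentrate on the projective side and invoke duality at the end.

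For $(2) \Rightarrow (1)$, I would let $M$ be an arbitrary $R$-module and pick a projective resolution $\cdots \to P_1 \to P_0 \to M \to 0$. Since $\Gpd_R(M) \leq \Ggldim(R) \leq m$, a standard characterization of the Gorenstein projective dimension makes the $m$-th syzygy $K_m = \Im(P_m \to P_{m-1})$ G-projective. By hypothesis (2), $K_m$ is then $(n,0)$-SG-projective. Applying Lemma~\ref{lem-chara-GP} $(2)\Rightarrow(1)$ to $M$, with $K_m$ serving as the prescribed $m$-th syzygy, concludes that $M$ is $(n,m)$-SG-projective. Since $M$ was arbitrary, (1) holds.

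For $(1) \Rightarrow (2)$, the inequality $\Ggldim(R) \leq m$ is immediate: by (1) every $R$-module is $(n,m)$-SG-projective, hence has Gorenstein projective dimension at most $m$ by Lemma~\ref{lem-chara-GP} $(1)\Rightarrow(2)$. For the second assertion, let $G$ be G-projective; then $G$ is $(n,m)$-SG-projective by (1). Here I would invoke the stronger syzygy statement from the proof of Lemma~\ref{lem-chara-GP} $(2)\Rightarrow(3)$ (drawing on \cite[Theorem 2.4(3)]{B5}): any $\Gpd_R(M)$-th syzygy of an $(n,m)$-SG-projective module $M$ is $(n,0)$-SG-projective. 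Applied with $M = G$ and $\Gpd_R(G) = 0$, the $0$-th syzygy is $G$ itself, so $G$ is $(n,0)$-SG-projective, which is what (2) requires.

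The main obstacle is precisely this last step: the statement of Lemma~\ref{lem-chara-GP}(2) only asserts existence of \emph{some} $m$-th syzygy with the $(n,0)$-SG-projective property, whereas I need the degenerate case where the ``$0$-th syzygy'' of $G$ is $G$ itself. Crossing this gap is exactly what the syzygy-uniformity from \cite[Theorem 2.4(3)]{B5} built into the proof of Lemma~\ref{lem-chara-GP} provides; alternatively one could realize $G$ as the $m$-th syzygy of a G-projective module $N$ extracted from the tail of a complete projective resolution of $G$, and then argue Schanuel-type stability of the $(n,0)$-SG-projective class under changes of $m$-th syzygy. Apart from this delicate point, the rest of the proof is a routine bookkeeping exercise with syzygies and Gorenstein projective dimension.
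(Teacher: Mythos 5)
Your proof is correct and follows essentially the same route as the paper: both directions reduce to Lemma~\ref{lem-chara-GP}, with $(2)\Rightarrow(1)$ argued identically via the G-projectivity of $m$-th syzygies over a ring of Gorenstein global dimension at most $m$. The only difference is in $(1)\Rightarrow(2)$, where the paper settles your ``delicate point'' by citing \cite[Lemma 2.6(1)]{B5} directly (a G-projective module that is $(n,m)$-SG-projective is $(n,0)$-SG-projective), which is cleaner than your degenerate $0$-th-syzygy reading of \cite[Theorem 2.4(3)]{B5}; note also that your alternative Schanuel-type fallback would be problematic as stated, since the $(n,0)$-SG-projective class is not closed under passing to direct summands (direct summands of SG-projectives are only G-projective in general).
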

\begin{proof}  We prove only the equivalence $(1)\Leftrightarrow (2).$
The equivalence $(1)\Leftrightarrow (3)$ has a dual proof.\\
$(1)\Rightarrow (2).$ We have $ \Ggldim(R) \leq m $ since every
$(n,m)$-SG-projective  $R$-module has Gorenstein projective
dimension at most $m$ (by Lemma \ref{lem-chara-GP}$(1)
\Rightarrow(2)$). From \cite[Lemma 2.6(1)]{B5}, we get that every
G-projective  $R$-module is $(n,0)$-SG-projective.\\
 $(2)\Rightarrow (1).$  Let $M$ be an $R$-module. Since $ \Ggldim(R) \leq m
 $, every $m^{th}$ syzygy of  $M$ is  G-projective, which
 is, by hypothesis, $(n,0)$-SG-projective. Therefore, by Lemma \ref{lem-chara-GP}$(2)
\Rightarrow(1)$, $M$ is $(n,m)$-SG-projective.\end{proof}

As a consequence, the Noetherian $(n,m)$-SG  rings are particular
cases of $m$-Gorenstein rings:  a ring $R$ is said to be
$m$-Gorenstein, for a positive integer $m$, if it is left and
right Noetherian with self-injective dimension at most $ m$ on
both the left and the right sides \cite[Definitions 9.1.1 and
9.1.9]{Rel-hom}.  The $m$-Gorenstein rings are characterized in
terms of Gorenstein homological dimensions (see \cite[Theorem
12.3.1]{Rel-hom}) and in terms of classical homological dimensions
(see  \cite[Theorem 9.1.11]{Rel-hom}). In the following result, we
rewrite this list of properties that   characterize the
$m$-Gorenstein rings, and we enlarge it using the notions of
$(n,m)$-SG-projective and $(n,m)$-SG-injective modules.

\begin{thm}[\cite{Rel-hom}, Theorems 9.1.11 and 12.3.1]\label{thm-Noe}
If $R$ is  a left and right Noetherian ring, then, for a positive
integer $m$, the following are equivalent:
\begin{enumerate}
    \item $R$ is $m$-Gorenstein;
    \item $\lGgldim(R)\leq m$;
    \item $\rGgldim(R)\leq m$;
     \item $\id_R(M)\leq m$ for every projective left (resp., right) $R$-module $M$;
     \item $\pd_R(M)\leq m$ for every injective left (resp., right)
     $R$-module  $M$;
  \item $\Gid_R(M)\leq m$ for every G-projective left (resp., right) $R$-module  $M$;
     \item $\Gpd_R(M)\leq m$ for every G-injective left (resp., right)
     $R$-module  $M$;
      \item  For every integer $n\geq 1$, every    $(n,0)$-SG-projective left (resp., right) $R$-module is
     $(n,m)$-SG-injective;
    \item  For every integer $n\geq 1$, every   $(n,0)$-G-injective left (resp., right) $R$-module is
     $(n,m)$-SG-projective.
\end{enumerate}
\end{thm}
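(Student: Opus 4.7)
The classical equivalences (1) $\Leftrightarrow$ (2) $\Leftrightarrow$ $\cdots$ $\Leftrightarrow$ (7) are recorded in \cite[Theorems 9.1.11 and 12.3.1]{Rel-hom}, so the task reduces to splicing (8) and (9) into the cycle. The plan is to establish the two short chains
\[
(1) \Rightarrow (8) \Rightarrow (4) \qquad \text{and} \qquad (1) \Rightarrow (9) \Rightarrow (5),
\]
which together with the classical result yield the full theorem.

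For (1) $\Rightarrow$ (8), I would fix $n \geq 1$ and take an $(n,0)$-SG-projective module $M$ with defining exact sequence $0 \to M \to P_n \to \cdots \to P_1 \to M \to 0$ whose terms are projective. Condition (4) gives $\id_R(P_i) \leq m$, so this sequence already has the shape demanded by the definition of $(n,m)$-SG-injective; and the required Ext vanishing $\Ext^i_R(J,M) = 0$ for $i > m$ and every injective $J$ is immediate from (5), since $\pd_R(J) \leq m$ makes $\Ext^i_R(J,-)$ vanish in those degrees. Exchanging (4) and (5) handles (1) $\Rightarrow$ (9) dually.

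For (8) $\Rightarrow$ (4) it is enough to use the case $n = 1$: every projective $P$ is $(1,0)$-SG-projective via the split exact sequence $0 \to P \to P \oplus P \to P \to 0$, so by (8) it is $(1,m)$-SG-injective. Lemma \ref{lem-chara-GI}(4) then provides $0 \to H \to J \to P \to 0$ with $\id_R(J) \leq m$, which splits because $P$ is projective, whence $\id_R(P) \leq \id_R(J) \leq m$. An entirely dual argument, using that every injective module is $(1,0)$-SG-injective and invoking Lemma \ref{lem-chara-GP}(4), establishes (9) $\Rightarrow$ (5).

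I do not foresee a serious obstacle here: once Lemmas \ref{lem-chara-GP} and \ref{lem-chara-GI} are in hand, the new characterizations amount to little more than a direct comparison of the $(n,0)$-SG and $(n,m)$-SG conditions against the classical bounds on the injective and projective dimensions of projective and injective modules. The only modest subtlety is recognizing that the exact sequence witnessing $(n,0)$-SG-projectivity of $M$ already witnesses $(n,m)$-SG-injectivity as soon as the projective terms have bounded injective dimension, which turns the forward direction into essentially a definition-chase.
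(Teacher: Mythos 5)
Your proposal is correct and follows essentially the same route as the paper: the new assertions (8) and (9) are deduced from (4) and (5) by observing that the defining sequence of an $(n,0)$-SG-projective module already witnesses $(n,m)$-SG-injectivity once projectives have injective dimension at most $m$, and the converses $(8)\Rightarrow(4)$, $(9)\Rightarrow(5)$ use exactly the paper's splitting argument (the paper extracts the surjection $Q_1\to M$ directly from the definition rather than via Lemma \ref{lem-chara-GI}(4), but the idea is identical). The only bookkeeping difference is that the paper does not treat $(1)$--$(7)$ as a single citation: it obtains $(6)\Rightarrow(4)$ and $(7)\Rightarrow(5)$ from Holm's results in \cite{HH1} rather than from \cite{Rel-hom} alone.
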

\begin{proof} The equivalences $(1) \Leftrightarrow (2) \Leftrightarrow
(3)$ follow from  \cite[Theorem  9.1.11]{Rel-hom}.
Then, trivially, these equivalent assertions imply the assertions $(6)$ and $(7)$.\\
 The equivalences $(1) \Leftrightarrow (4) \Leftrightarrow
(5)$ are the same  as \cite[Theorems 12.3.1  $(1) \Leftrightarrow
(2) \Leftrightarrow (3)$]{Rel-hom}. Then, easily we show that
these
equivalent assertions imply the assertions $(8)$ and $(9)$.\\
For the implications $(6) \Rightarrow (4)$ and $ (7)\Rightarrow
(5)$, use \cite[Theorems 2.1 and 2.2]{HH1}.\\
To prove the implication $(8) \Rightarrow (5)$, consider a
projective $R$-module $M$. Then, it is $(n,0)$-SG-projective for
every $n\geq 1$, and so, by hypothesis,  $M$ is
$(n,m)$-SG-injective. This means that  $M$  is a quotient of an
$R$-module $I$ with injective dimension at most $m$. Then, as a
projective $R$-module, $M$ is a direct summand of $I$. Therefore,
$\id_R(M)\leq m$.\\
Similarly we prove  the implication   $(9) \Rightarrow
(6)$.\end{proof}

Note that \cite[Proposition 2.6]{BM3} shows that we do not need,
in Theorem \ref{thm-Noe}, to assume first that the ring is
Noetherian when $m=0$. In this case the ring $R$ is
quasi-Frobenius (i.e., $0$-Gorenstein).

Aa a consequence, we have for Noetherian $(n,m)$-SG rings the
following result:

\begin{corollary}\label{cor-nm-Noe} Let $n\geq 1$ and $m\geq 0$ be
integers. For  a left and right Noetherian  ring  $R$,  the
following assertions are equivalent:
\begin{enumerate}
    \item   $R$ is left $(n,m)$-SG rings;
    \item   $R$ is right $(n,m)$-SG rings;
    \item   $R$ is $m$-Gorenstein  and  every G-projective (left or right) $R$-module is
    $(n,0)$-SG-projective
    \item     $R$ is $m$-Gorenstein  and  every G-injective (left or right) $R$-module is
    $(n,0)$-SG-injective.
\end{enumerate}
\end{corollary}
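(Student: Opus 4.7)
My plan is to assemble this corollary directly from Theorem \ref{thm-main} and Theorem \ref{thm-Noe}. Applying the equivalence $(1)\Leftrightarrow(2)\Leftrightarrow(3)$ of Theorem \ref{thm-main} to left $R$-modules, assertion (1) of the corollary becomes: $\lGgldim(R)\leq m$ together with the condition that every G-projective left $R$-module is $(n,0)$-SG-projective (equivalently, together with the G-injective/$(n,0)$-SG-injective version). Applying the same theorem to right $R$-modules yields the analogous characterization of assertion (2). Under the Noetherian hypothesis, Theorem \ref{thm-Noe} shows that $\lGgldim(R)\leq m$, $\rGgldim(R)\leq m$, and the condition of $R$ being $m$-Gorenstein are one and the same, so the ring-theoretic clause in (3) and (4) is automatically side-independent. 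This at once yields $(1)\Leftrightarrow(3_{\ell})\Leftrightarrow(4_{\ell})$ and $(2)\Leftrightarrow(3_{r})\Leftrightarrow(4_{r})$, where the subscript indicates on which side the module-level condition is read.

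The remaining content of the corollary is the equivalence $(1)\Leftrightarrow(2)$, which amounts to showing that the module-theoretic parts of (3) and (4) are themselves left-right symmetric over a Noetherian $m$-Gorenstein ring. I expect this to be the main obstacle, as the preceding theorems do not supply it in one stroke. My plan is to use Lemmas \ref{lem-chara-GP} and \ref{lem-chara-GI} to characterize $(n,0)$-SG-projective and $(n,0)$-SG-injective modules through their periodic exact sequences with projective (resp.\ injective) terms, and then to exploit the $m$-Gorenstein property---in particular the equivalences Theorem \ref{thm-Noe}(4)--(5), which say that over such a ring every projective module has injective dimension at most $m$ and every injective module has projective dimension at most $m$---to convert a left-sided condition into its right-sided analogue. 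Proposition \ref{prop-main} will then let me pivot freely between the $(n,m)$-SG-projective and $(n,m)$-SG-injective phrasings on each side, closing the loop between (1), (2), (3), and (4).
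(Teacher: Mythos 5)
Your assembly of the corollary from Theorems \ref{thm-main} and \ref{thm-Noe} is exactly what the paper does --- its entire proof reads ``Apply Theorems \ref{thm-main} and \ref{thm-Noe}'' --- and your derivation of $(1)\Leftrightarrow(3_{\ell})\Leftrightarrow(4_{\ell})$ and $(2)\Leftrightarrow(3_{r})\Leftrightarrow(4_{r})$ is correct: Theorem \ref{thm-main} reduces ``left $(n,m)$-SG'' to ``$\lGgldim(R)\leq m$ plus the module-level clause on the left,'' and Theorem \ref{thm-Noe} identifies $\lGgldim(R)\leq m$, $\rGgldim(R)\leq m$ and $m$-Gorensteinness. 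You are also right, and more candid than the paper, that this leaves the left--right equivalence $(1)\Leftrightarrow(2)$ unsettled: it comes down to showing that, over an $m$-Gorenstein ring, ``every G-projective left $R$-module is $(n,0)$-SG-projective'' holds if and only if the same statement holds for right modules, and nothing in the two cited theorems supplies that.

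The gap is in your plan for closing this. Conditions (4) and (5) of Theorem \ref{thm-Noe} (projectives have injective dimension $\leq m$, injectives have projective dimension $\leq m$) are each one-sided statements; their left and right versions are interchangeable only because each is separately equivalent to the symmetric condition ``$R$ is $m$-Gorenstein.'' They provide no functor from left modules to right modules, so combining them with Lemmas \ref{lem-chara-GP} and \ref{lem-chara-GI} and Proposition \ref{prop-main} keeps you entirely on one side of the ring: you can trade ``$(n,m)$-SG-projective'' for ``$(n,m)$-SG-injective'' for left modules, or for right modules, but never pass from one side to the other. A genuine transfer would need something like character-module duality $\Hom_{\Z}(-,\mathbb{Q}/\Z)$, and even that only sends left G-projectives to a subclass of the right G-injectives rather than onto all of them. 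So the step you flag as the main obstacle is indeed an obstacle, and your sketch does not overcome it; to be fair, the paper's own one-line proof silently skips the same point, so your proposal is no less complete than the published argument --- it just does not succeed in supplying what that argument omits.
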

\begin{proof}   Apply Theorems \ref{thm-main} and
\ref{thm-Noe}.\end{proof}

In \cite[Section 3]{BMO}, $(1,0)$-SG commutative rings are studied
and they are called SG-semisimple. It is proved that a local
$(1,0)$-SG commutative ring is just a ring with only one
non-trivial ideal \cite[Theorem 3.7]{BMO}. For $(n,0)$-SG rings we
have:

\begin{corollary}\label{cor-n0-Noe} Let $n\geq 1$  be
an integer. For   a ring  $R$,  the following assertions are
equivalent:
\begin{enumerate}
    \item   $R$ is left $(n,0)$-SG rings;
    \item   $R$ is right $(n,0)$-SG rings;
    \item   $R$ is  quasi-Frobenius and  every G-projective (left or right) $R$-module is
    $(n,0)$-SG-projective
    \item     $R$ is quasi-Frobenius and  every G-injective (left or right) $R$-module is
    $(n,0)$-SG-injective.
\end{enumerate}
\end{corollary}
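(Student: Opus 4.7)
The plan is to reduce this directly to Corollary~\ref{cor-nm-Noe} applied at $m=0$, since that corollary already packages together Theorem~\ref{thm-main} and Theorem~\ref{thm-Noe} in the form we want. The only genuine new issue to address is that Corollary~\ref{cor-n0-Noe} does \emph{not} carry a Noetherian hypothesis, whereas Corollary~\ref{cor-nm-Noe} does. So the crux of the argument is to show that, in the special case $m=0$, each of the four conditions (1)--(4) already \emph{forces} $R$ to be Noetherian (in fact, quasi-Frobenius), so the Noetherian hypothesis can be removed free of charge.

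The key input is the remark following Theorem~\ref{thm-Noe}, which records (via \cite[Proposition 2.6]{BM3}) that the case $m=0$ of Theorem~\ref{thm-Noe} does not require the Noetherian assumption a priori: if $\Ggldim(R)=0$, then $R$ is automatically quasi-Frobenius, hence left and right Noetherian. Using this, I would first dispatch (3) and (4): they mention quasi-Frobeniusness explicitly, so Noetherianness is built in. For (1), I would apply Theorem~\ref{thm-main} with $m=0$ to get $\lGgldim(R)=0$, and then invoke the cited remark to conclude that $R$ is quasi-Frobenius. Condition (2) is symmetric: the dual, right-sided form of Theorem~\ref{thm-main} gives $\rGgldim(R)=0$, and the same remark (since quasi-Frobeniusness is left-right symmetric) yields the conclusion. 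Thus in all four cases $R$ is quasi-Frobenius.

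With Noetherianness secured, the argument terminates by applying Corollary~\ref{cor-nm-Noe} with $m=0$, recalling that $0$-Gorenstein is, by definition, exactly quasi-Frobenius. That corollary then directly identifies the four statements (1)--(4) of Corollary~\ref{cor-nm-Noe} at $m=0$ with the four statements of Corollary~\ref{cor-n0-Noe}.

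The only step that is not a mechanical specialization is the removal of the Noetherian hypothesis, and this is the main obstacle: it relies entirely on the external input \cite[Proposition 2.6]{BM3} that $\Ggldim(R)=0$ implies quasi-Frobeniusness without any finiteness assumption on $R$ beforehand. Everything else is bookkeeping, so I expect the final written proof to be, in the spirit of the proof of Corollary~\ref{cor-nm-Noe}, essentially a one-line appeal: \emph{Apply Corollary~\ref{cor-nm-Noe} with $m=0$ together with the remark following Theorem~\ref{thm-Noe}.}
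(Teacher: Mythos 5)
Your proposal is correct and matches the paper's own proof, which is exactly the one-line appeal you anticipated: ``Apply \cite[Proposition 2.6]{BM3} and Corollary \ref{cor-nm-Noe}.'' You have correctly identified that the only non-mechanical point is using \cite[Proposition 2.6]{BM3} to remove the Noetherian hypothesis in the $m=0$ case, and your detailed justification of that step is sound.
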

\begin{proof}   Apply \cite[Proposition 2.6]{BM3} and Corollary \ref{cor-nm-Noe}.\end{proof}

Also as a consequence of the main result, we get the following
result which study the relation between rings of finite left
global dimension and left $(n,m)$-SG rings.

\begin{corollary} \label{cor-gldim}   Let $R$ be a ring  and let
$m\geq 0$ be an integer. Then, $\lgldim(R)\leq m$ if and only if
$R$ is  left $(n,m)$-SG for every integer  $n\geq 1$  and
$\wdim(R)<\infty$.
\end{corollary}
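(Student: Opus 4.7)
The plan is to establish both implications using Theorem~\ref{thm-main} and Lemma~\ref{lem-chara-GP}, together with the standard fact that, whenever the weak global dimension is finite, the left global and left Gorenstein global dimensions coincide. The forward implication reduces to the observation that every projective module is automatically $(n,0)$-SG-projective. The reverse implication combines Theorem~\ref{thm-main} with the $\lgldim=\Ggldim$ result.

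For the forward direction, assume $\lgldim(R)\leq m$. Then $\wdim(R)\leq\lgldim(R)\leq m<\infty$. For any $R$-module $M$ we have $\pd_R(M)\leq m$, hence $\Gpd_R(M)\leq m$ and every $m^{th}$ syzygy of $M$ (in any projective resolution) is projective by Schanuel's lemma. By Lemma~\ref{lem-chara-GP}$(2)\Rightarrow(1)$ it then suffices to verify that every projective $R$-module $P$ is $(n,0)$-SG-projective for every $n\geq 1$. For this I would exhibit the exact sequence
$$0\to P\xrightarrow{\binom{1}{0}}P\oplus P\xrightarrow{\alpha}\cdots\xrightarrow{\alpha}P\oplus P\xrightarrow{(0,1)}P\to 0$$
with $n$ copies of $P\oplus P$ and $\alpha(x,y)=(y,0)$; exactness follows from the identity $\ker(\alpha)=\Im(\alpha)=P\oplus 0$, and $\Ext^{i}_R(P,Q)=0$ for $i>0$ and projective $Q$ is automatic.

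For the backward direction, assume $R$ is left $(n,m)$-SG for every $n\geq 1$ and $\wdim(R)<\infty$. Theorem~\ref{thm-main} immediately gives $\Ggldim(R)\leq m$. The remaining step is to conclude $\lgldim(R)\leq m$, for which I invoke the equality $\lgldim(R)=\Ggldim(R)$ that holds whenever $\wdim(R)<\infty$, available in \cite{BM3,BM4}.

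The main obstacle is precisely this last step. Without appealing to \cite{BM3,BM4}, it would amount to proving directly that a Gorenstein projective module of finite flat dimension is projective, a fact that, although true, requires non-trivial homological arguments going beyond the lemmas developed in this paper.
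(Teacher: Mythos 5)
Your proof is correct and follows essentially the same route as the paper, whose own proof simply combines Theorem \ref{thm-main} with \cite[Corollary 1.2]{BM3} (the equality $\lgldim(R)=\Ggldim(R)$ when $\wdim(R)<\infty$). The ``obstacle'' you flag at the end is not one: citing that published equality is exactly what the paper does, so no further argument is required.
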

\begin{proof} Follows easily from Theorem \ref{thm-main} and \cite[Corollary 1.2]{BM3}.\end{proof}

The following establish some relations between left $(n,m)$-SG
rings.

\begin{proposition}\label{prop-relations} For two integers  $n\geq 1$ and $m\geq 0$, we have the following
assertions:
\begin{enumerate}
    \item Every left $(n,m)$-SG ring is
    $(n,m')$-SG for every $m'\geq m$.
    \item  Every  left $(n,m)$-SG ring  is left
    $(nk,m )$-SG for every $k\geq 1$.\\
    In particular, every  left $(1,m)$-SG ring is
   left $(n,m)$-SG   for every  $n\geq 1$.
\end{enumerate}
\end{proposition}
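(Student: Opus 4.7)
The plan is to argue both parts directly from Definition \ref{DefnSGproj}, without invoking Lemma \ref{lem-chara-GP}, since in each case the witnessing exact sequence for $(n,m)$-SG-projectivity can be reused or concatenated with itself.

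For part (1), I would fix $m'\geq m$ and take any $R$-module $M$. By hypothesis $M$ is $(n,m)$-SG-projective, so it sits in an exact sequence
$$0\to M\to Q_n\to\cdots\to Q_1\to M\to 0$$
with $\pd_R(Q_i)\leq m$ and $\Ext^i_R(M,Q)=0$ for every $i>m$ and every projective $Q$. I would observe that this very sequence is also a witness for $(n,m')$-SG-projectivity: we have $\pd_R(Q_i)\leq m\leq m'$, and any index $i>m'$ a fortiori satisfies $i>m$, so the required Ext-vanishing above $m'$ is already supplied.

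For part (2), the plan is to splice $k$ copies of the defining loop. Given $0\to M\xrightarrow{\alpha} Q_n\to\cdots\to Q_1\xrightarrow{\beta} M\to 0$, I would identify the terminal $M$ of each copy with the initial $M$ of the next, producing an exact sequence
$$0\to M\to Q_n\to\cdots\to Q_1\to Q_n\to\cdots\to Q_1\to M\to 0$$
with $nk$ middle terms, each still of projective dimension at most $m$. At every splicing junction the new arrow $Q_1\to Q_n$ is $\alpha\beta$; since $\beta$ is surjective and $\alpha$ is injective, one checks $\Im(\alpha\beta)=\alpha(\beta(Q_1))=\alpha(M)=\Ker(Q_n\to Q_{n-1})$ and $\Ker(\alpha\beta)=\Ker(\beta)=\Im(Q_2\to Q_1)$, so exactness is preserved. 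The Ext-vanishing on $M$ is unchanged, so $M$ is $(nk,m)$-SG-projective, and the ``in particular'' clause is the case $n=1$.

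I do not foresee a real obstacle: part (1) is immediate from the definition, and part (2) reduces to a routine bookkeeping check that splicing preserves exactness at the copies of $M$ that are identified.
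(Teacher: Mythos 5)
Your argument is correct, and it matches the paper's approach in substance: the paper's proof is a one-line citation to the module-level facts in \cite[Proposition 2.2]{B5} (that an $(n,m)$-SG-projective module is $(n,m')$-SG-projective for $m'\geq m$ and $(nk,m)$-SG-projective for $k\geq 1$), which are exactly the statements you verify directly from Definition \ref{DefnSGproj} by reusing the witnessing sequence for (1) and splicing $k$ copies of it for (2). The only cosmetic caveat is that the paper's item (1) literally says ``$(n,m')$-SG'' rather than ``left $(n,m')$-SG''; your proof establishes the left version, which is clearly what is intended.
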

\begin{proof}   A simple consequence of \cite[Proposition
2.2]{B5}.\end{proof}

Naturally, one would like to have examples of $(n,m)$-SG rings
which are neither  $(n-1,m)$-SG nor  $(n,m-1)$-SG  for every
integers  $n\geq 1$ and $m\geq 0$. In what follows, we give
examples for  $n=1 $ and $2$. For that, we need some change
of ring results.\\

The next result studies  the notion of left $(n,m)$-SG rings in
direct products of rings. For the convenience of the reader, we
recall some properties concerning the structure of modules and
homomorphisms over direct products of rings (for more details
please see \cite[Section 2.6]{BK}).\bigskip

Let $R=\displaystyle\prod_{i=1}^n R_i$ be a direct product of
rings. If $M_i$ is a left (resp., right) $R_i$-module for
$i=1,...,n$, then $M=M_1\oplus\cdots\oplus M_n$ is a left (resp.,
right) $R$-module. Conversely, if $M$ is  a left (resp., right)
$R$-module, then it is of the form $M=M_1\oplus\cdots\oplus M_n$,
where  $M_i$ is  a left (resp., right) $R_i$-module for
$i=1,...,n$ \cite[Subsection 2.6.6]{BK}. Also, the homomorphisms
of $R$-modules are determined by their actions on the $R_i$-module
components.  This is summarized in the following result:

\begin{lemma}[\cite{BK}, Theorem 2.6.8]\label{lem-Hom-structure-product} Let
$R=\displaystyle\prod_{i=1}^n R_i$ be a direct product of rings
and let $M=M_1 \oplus \cdots\oplus M_n$ and
$N=N_1\oplus\cdots\oplus N_n$ be decompositions of  left (resp.,
right) $R$-modules into left (resp., right)  $R_i$-modules. Then,
the following hold:
\begin{enumerate}
    \item There is a natural isomorphism of abelian groups:
 $$\begin{array}{ccl}
   \Hom_R(M,N)&\stackrel{\cong}\longrightarrow   &  \Hom_{R_1}(M_1,N_1)\oplus
   \cdots
    \oplus\Hom_{R_n}(M_n,N_n)  \\
    \alpha & \longmapsto & \alpha_1\oplus \cdots
    \oplus \alpha_n \\
 \end{array}
$$
where the homomorphism $\alpha_1\oplus \cdots
    \oplus \alpha_n$ is defined by: $$(\alpha_1\oplus \cdots
    \oplus \alpha_n)(m_1,...,m_n)=(\alpha_1m_1, ...,
   \alpha_n m_n).$$
    \item The homomorphism $\alpha$ is injective (resp.,
    surjective) if  and only if  each $\alpha_i$ is injective (resp.,
    surjective).
\end{enumerate}
\end{lemma}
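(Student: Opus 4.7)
The plan is to exploit the central idempotents associated with the direct product decomposition $R=\prod_{i=1}^n R_i$. First I would introduce $e_1,\ldots,e_n\in R$, where $e_i$ is the image of $1_{R_i}$ under the canonical embedding $R_i\hookrightarrow R$. These satisfy $e_ie_j=\delta_{ij}e_i$, $\sum_{i=1}^n e_i=1_R$, and each $e_i$ lies in the centre of $R$. For any left $R$-module $M$ one then checks that $e_iM$ is an $R$-submodule on which the action of $R$ factors through the quotient $R/\bigoplus_{j\neq i}e_jR\cong R_i$, so $e_iM$ is naturally a left $R_i$-module; and $M=\bigoplus_{i=1}^n e_iM$ since $\sum_i e_i=1_R$. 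This recovers the decomposition $M=M_1\oplus\cdots\oplus M_n$ with $M_i=e_iM$, and the pieces are forced by this formula.

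Given an $R$-homomorphism $\alpha\colon M\to N$, centrality of $e_i$ yields $\alpha(e_im)=e_i\alpha(m)\in e_iN=N_i$, so $\alpha$ restricts to an $R_i$-homomorphism $\alpha_i\colon M_i\to N_i$. Sending $\alpha$ to $(\alpha_1,\ldots,\alpha_n)$ defines a group homomorphism
\[
\Phi\colon\Hom_R(M,N)\longrightarrow\bigoplus_{i=1}^n\Hom_{R_i}(M_i,N_i).
\]
For the inverse, given a tuple $(\alpha_i)$, I would set $\alpha(m_1,\ldots,m_n)=(\alpha_1m_1,\ldots,\alpha_nm_n)$ and verify $R$-linearity using the componentwise action of $R=\prod R_i$. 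The two constructions are mutually inverse by design, and naturality in $M$ and $N$ is automatic since everything is defined coordinatewise.

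For part~(2), once the isomorphism $\Phi$ is in place the claim reduces to a routine fact about morphisms between direct sums: $\bigoplus\alpha_i$ is injective (respectively, surjective) if and only if each $\alpha_i$ is, since $\Ker(\bigoplus\alpha_i)=\bigoplus\Ker\alpha_i$ and $\Im(\bigoplus\alpha_i)=\bigoplus\Im\alpha_i$. I do not anticipate a substantial obstacle here: the whole statement is formal in nature, and the only step meriting care is the initial identification of the $R_i$-module structure on $e_iM$ and the compatibility of the $R$-action with the coordinatewise $R_i$-actions, which follows from the ring isomorphism $e_iR\cong R_i$. The right-module version is the same argument read on the other side.
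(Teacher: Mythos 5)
Your proof is correct, but there is nothing in the paper to compare it against: the author states this lemma with the citation [BK, Theorem~2.6.8] and gives no proof at all, treating it as a standard structural fact about modules over a direct product of rings. Your argument via the central orthogonal idempotents $e_1,\dots,e_n$ with $\sum_i e_i=1_R$ and $e_ie_j=\delta_{ij}e_i$ is the standard way to prove it, and all the steps check out: $e_iM$ is an $R$-submodule because $e_i$ is central, $M=\bigoplus_i e_iM$ because the $e_i$ are orthogonal and sum to $1$, the $R$-action on $e_iM$ factors through $e_iR\cong R_i$, and any given decomposition $M=M_1\oplus\cdots\oplus M_n$ into $R_i$-modules is forced to satisfy $M_i=e_iM$ (since $e_j$ acts as $\delta_{ij}$ on $M_i$), so your identification is consistent with the hypothesis of the lemma. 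The centrality computation $\alpha(e_im)=e_i\alpha(m)$ correctly produces the restrictions $\alpha_i$, the coordinatewise formula gives the inverse of $\Phi$, and part~(2) follows from $\Ker(\bigoplus_i\alpha_i)=\bigoplus_i\Ker\alpha_i$ and $\Im(\bigoplus_i\alpha_i)=\bigoplus_i\Im\alpha_i$. In short, you have supplied a complete and correct proof of a statement the paper merely quotes.
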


Using this result with \cite[Corollary 2.6.9]{BK}, we get the
following known result:

\begin{lemma}\label{lem-prod2} Let
$R=\displaystyle\prod_{i=1}^n R_i$ be a direct product of rings
and let $M=M_1 \oplus \cdots\oplus M_n$ be an $R$-module. Then,
$$\pd_R(M)=\sup\{\pd_{R_i}(M_i);\ 1\leq i\leq n\}.$$
Consequently, $$\lgldim(R)=\sup\{\lgldim(R_i);\ 1\leq i\leq n\}.$$
\end{lemma}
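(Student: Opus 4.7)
The plan is to reduce both formulas to a single structural fact: a decomposed $R$-module $P = P_1 \oplus \cdots \oplus P_n$ is projective over $R$ if and only if each $P_i$ is projective over $R_i$. One direction is immediate because $R = R_1 \oplus \cdots \oplus R_n$ as $R$-modules, so each $R_i$ is a direct summand of $R$ and therefore $R$-projective; consequently, any free $R_i$-module is $R$-projective, and so is any $R_i$-direct summand of it. The converse follows from Lemma \ref{lem-Hom-structure-product}: given a surjection $f_i\colon N_i \twoheadrightarrow P_i$ of $R_i$-modules, the induced map $\oplus f_i\colon \oplus N_i \twoheadrightarrow P$ is a surjection of $R$-modules by part (2), and any $R$-lift $\oplus P_i \to \oplus N_i$ decomposes via part (1) into componentwise $R_i$-lifts. (Alternatively, one cites \cite[Corollary 2.6.9]{BK} directly.)

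For the inequality $\pd_R(M) \le \sup_i \pd_{R_i}(M_i)$, I would choose projective resolutions $P^{(i)}_\bullet \to M_i$ of $R_i$-modules of length at most $d := \sup_i \pd_{R_i}(M_i)$ (padding with zeros if needed; the statement is vacuous if the sup is $\infty$), and form the degreewise direct sum $P_\bullet := \bigoplus_i P^{(i)}_\bullet \to M = \bigoplus_i M_i$. Each term $\bigoplus_i P^{(i)}_j$ is $R$-projective by the structural fact above, and exactness of the complex at each degree follows from Lemma \ref{lem-Hom-structure-product}(2), since direct sums of exact sequences are exact.

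For the reverse inequality, I would take any projective resolution $P_\bullet \to M$ over $R$ of length $k$, decompose each term $P_j = \bigoplus_i P_j^{(i)}$ and each boundary map via Lemma \ref{lem-Hom-structure-product}(1), and obtain $n$ complexes $P^{(i)}_\bullet \to M_i$. The structural fact shows each $P_j^{(i)}$ is $R_i$-projective, and Lemma \ref{lem-Hom-structure-product}(2) shows that each $P^{(i)}_\bullet \to M_i$ is exact, yielding $R_i$-projective resolutions of length at most $k$ for every $i$. Hence $\pd_{R_i}(M_i) \le \pd_R(M)$ for all $i$, finishing the first formula.

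The global-dimension statement is then an immediate consequence: every $R$-module decomposes as $M = \bigoplus_i M_i$, so taking the supremum of the first formula over all $R$-modules on the left and over all choices of tuples $(M_i)$ on the right gives $\lgldim(R) = \sup_i \lgldim(R_i)$. No serious obstacle is expected; the only mild care is to track the indexing when two different direct-sum decompositions (over degrees and over factors) are combined, and to handle the degenerate case where some $\pd_{R_i}(M_i)$ is infinite, where both sides are infinite by the decomposition argument.
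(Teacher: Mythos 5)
Your proof is correct, and it follows the same route the paper intends: the paper states this lemma as a known consequence of Lemma~\ref{lem-Hom-structure-product} together with \cite[Corollary 2.6.9]{BK} and gives no further argument, while you have simply written out in full the componentwise projectivity criterion and the degreewise assembly/decomposition of resolutions that those citations encapsulate. The handling of the infinite case and of exactness via Lemma~\ref{lem-Hom-structure-product}(2) is sound, so nothing is missing.
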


Then, using these results, we get the following result:

\begin{proposition}\label{prop-prod}  Let
$R=\displaystyle\prod_{i=1}^n R_i$ be a direct product of rings
and let $M=M_1 \oplus \cdots\oplus M_n$  be an $R$-module. Then,
for  two integers $n\geq 1$ and $m\geq 0$, $M$ is an
$(n,m)$-SG-projective $R$-module if and only if each $M_i$ is an
$(n,m)$-SG-projective
$R_i$-module.\\
Consequently,  $R$ is left $(n,m)$-SG  if and only if each $R_i$
is left $(n,m)$-SG.
\end{proposition}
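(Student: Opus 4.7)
The plan is to reduce everything to the componentwise structural facts given by Lemma~\ref{lem-Hom-structure-product} and Lemma~\ref{lem-prod2}. Every morphism of $R$-modules decomposes into a direct sum of $R_i$-morphisms, and exactness transfers in both directions between an $R$-sequence and its component $R_i$-sequences. Combined with the fact that projective dimension decomposes (Lemma~\ref{lem-prod2}), each ingredient in the definition of an $(n,m)$-SG-projective module is ``componentwise'' in nature. Note one piece of notational overloading: the $n$ indexing the direct product and the $n$ in ``$(n,m)$-SG'' are the same symbol in the statement but unrelated; I would use the letter as in the statement and trust the reader.

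First I would establish the auxiliary formula
$$\Ext^k_R(M,Q) \;\cong\; \bigoplus_{i=1}^n \Ext^k_{R_i}(M_i,Q_i)$$
for any $R$-module $Q = Q_1 \oplus \cdots \oplus Q_n$. Starting from a projective resolution $P_\bullet \to M$ over $R$, Lemma~\ref{lem-Hom-structure-product} gives a componentwise decomposition $P_\bullet = P_{\bullet,1} \oplus \cdots \oplus P_{\bullet,n}$ with each $P_{\bullet,i}$ a projective $R_i$-resolution of $M_i$; applying $\Hom_R(-,Q)$ and using part (1) of the same lemma yields the displayed isomorphism. In particular, a projective $R$-module is precisely a direct sum $Q = Q_1 \oplus \cdots \oplus Q_n$ with each $Q_i$ projective over $R_i$.

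For the forward direction, assume $M$ is $(n,m)$-SG-projective over $R$ and take a defining exact sequence $0 \to M \to Q_n \to \cdots \to Q_1 \to M \to 0$. Decomposing each $Q_k$ as $(Q_k)_1 \oplus \cdots \oplus (Q_k)_n$ and applying Lemma~\ref{lem-Hom-structure-product}(2), we obtain for every $i$ an exact sequence $0 \to M_i \to (Q_n)_i \to \cdots \to (Q_1)_i \to M_i \to 0$ of $R_i$-modules, with $\pd_{R_i}((Q_k)_i) \leq \pd_R(Q_k) \leq m$ by Lemma~\ref{lem-prod2}. For the required Ext vanishing, given any projective $R_i$-module $P$, set $\tilde P = 0 \oplus \cdots \oplus P \oplus \cdots \oplus 0$, which is projective over $R$; the auxiliary formula gives $\Ext^k_{R_i}(M_i, P) = \Ext^k_R(M,\tilde P) = 0$ for $k > m$. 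The converse direction runs symmetrically: given $(n,m)$-SG-projective sequences for each $M_i$, form their componentwise direct sum to obtain the required sequence for $M$ (Lemma~\ref{lem-prod2} takes care of projective dimensions), and verify the Ext condition on an arbitrary projective $R$-module $Q = \bigoplus_i Q_i$ via the auxiliary formula.

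The ``consequently'' part is then immediate: since every $R$-module decomposes as $M_1 \oplus \cdots \oplus M_n$ with each $M_i$ a left $R_i$-module, the class of $(n,m)$-SG-projective $R$-modules exhausts all $R$-modules iff the class of $(n,m)$-SG-projective $R_i$-modules exhausts all $R_i$-modules for each $i$, which by Definition~\ref{Def-nmSG} is the desired equivalence for the left $(n,m)$-SG property. I do not expect any real obstacle; the only step that needs care is the identification of projective $R$-modules as direct sums of projective $R_i$-modules, which is needed to reduce the Ext condition on arbitrary projectives to the componentwise one.
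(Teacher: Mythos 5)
Your proof is correct and follows exactly the route the paper intends: the paper states Proposition~\ref{prop-prod} without a written proof, merely noting that it follows from Lemma~\ref{lem-Hom-structure-product} and Lemma~\ref{lem-prod2}, and your argument fills in precisely those details (componentwise decomposition of the defining exact sequence, the $\Ext$ decomposition, and the identification of projective $R$-modules with direct sums of projective $R_i$-modules). No gaps; your remark about the overloaded symbol $n$ is also a fair observation about the statement as printed.
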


Now, we can give the first example.

\begin{example}\label{exm-1}
Consider the the quotient ring $R=\Z/4\Z$, where   $\Z$ denotes
the ring of integers, and consider a family of rings $S_i$ for
$i\geq 1$, such that $\lgldim(S_i)=i$. Then, for every $i\geq 1$,
the direct product of rings $R\times S_i$ is left $(1,i)$-SG, but
it is not left $(1,i-1)$-SG.
\end{example}
\begin{proof}  From \cite[Corollary 3.9]{BMO}, $R$ is a commutative
$(1,0)$-SG ring. Then, using Proposition \ref{prop-prod} and Lemma
\ref{lem-prod2}, we get, for every $i\geq 1$,   that the ring
$R\times S_i$ is left $(1,i)$-SG, but it is not left
$(1,i-1)$-SG.\end{proof}

We end with examples of $(2,m)$-SG rings.

\begin{proposition}\label{prop-2m}
Let $R$ be a commutative ring with $\gldim(R )=m$   for some
integer $m\geq 1$, such that $R$ contains a non-zero divisor
element $x$. Then, the quotient ring $ R /xR $ is   $(2,m-1)$-SG.
\end{proposition}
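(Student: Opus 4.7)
The plan is to verify Definition \ref{DefnSGproj} directly for every $\bar R$-module $M$, where $\bar R := R/xR$: I will exhibit an exact sequence $0 \to M \to Q_2 \to Q_1 \to M \to 0$ of $\bar R$-modules with $\pd_{\bar R}(Q_i) \leq m-1$ and then verify $\Ext^i_{\bar R}(M, Q) = 0$ for every $i > m-1$ and every projective $\bar R$-module $Q$.

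The sequence arises from a simple $\Tor$ calculation. Choose a free $R$-module $F$ surjecting onto $M$, and let $K = \Ker(F \to M)$, so that $0 \to K \to F \to M \to 0$ is short exact over $R$. Since $\gldim(R) = m$, one has $\pd_R(K) \leq m-1$; moreover $x$ is a non-zero divisor on $K$, because $K$ embeds in the free $R$-module $F$. Applying $\bar R \otimes_R -$ and using $\pd_R(\bar R) = 1$ to kill $\Tor_i$ for $i \geq 2$, together with $\Tor_1^R(\bar R, F) = 0 = \Tor_1^R(\bar R, K)$ (projectivity of $F$ and $K$-regularity of $x$) and $\Tor_1^R(\bar R, M) = \{m \in M : xm = 0\} = M$, the long exact $\Tor$ sequence collapses to
$$0 \to M \to K/xK \to \bar F \to M \to 0,$$
with $\bar F := F/xF$ a free $\bar R$-module. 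A standard change-of-rings induction (using that $x$ remains regular on every $R$-syzygy of $K$) gives $\pd_{\bar R}(K/xK) \leq \pd_R(K) \leq m-1$, so the middle terms have the required projective dimension.

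For the $\Ext$-vanishing, I would prove the stronger statement $\Gpd_{\bar R}(M) \leq m-1$. Take an $\bar R$-projective resolution $\cdots \to P_1 \to P_0 \to M \to 0$ of $M$ with $(m-1)$-st syzygy $K_{m-1}$. Each $P_i$ has $\pd_R(P_i) = 1$ (from the $R$-resolution $0 \to R \xrightarrow{x} R \to \bar R \to 0$) and $\pd_R(M) \leq m$; iterating the short exact sequence bound $\pd_R(\mathrm{syz}) \leq \max(1,\pd_R(\mathrm{prev})-1)$ yields $\pd_R(K_{m-1}) \leq 1$. Now apply the same $\Tor$-construction to $K_{m-1}$ in place of $M$: one obtains an exact sequence $0 \to K_{m-1} \to \bar A \to \bar B \to K_{m-1} \to 0$ with $\bar A, \bar B$ free $\bar R$-modules, and splicing gives a $2$-periodic complete $\bar R$-projective resolution of $K_{m-1}$. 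Hence $K_{m-1}$ is Gorenstein projective, $\Gpd_{\bar R}(M) \leq m-1$, and consequently $\Ext^i_{\bar R}(M, Q) = 0$ for $i > m-1$ and every projective $\bar R$-module $Q$.

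Combining the two ingredients, every $\bar R$-module is $(2, m-1)$-SG-projective, so $\bar R$ is $(2, m-1)$-SG. The main care-points are the $x$-regularity checks at each step and the classical change-of-rings estimate $\pd_{\bar R}(N/xN) \leq \pd_R(N)$ whenever $x$ is both $R$-regular and $N$-regular, which is established by a short induction on $\pd_R(N)$.
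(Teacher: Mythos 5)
Your construction of the four-term exact sequence $0\to M\to K/xK\to F/xF\to M\to 0$ via the $\Tor$ long exact sequence, and the estimate $\pd_{R/xR}(K/xK)\le\pd_R(K)\le m-1$, coincide with the paper's argument (the paper invokes \cite[Theorem 4.3.5]{Wei} for the change-of-rings inequality you prove by induction). The divergence, and the problem, is in your treatment of the $\Ext$-vanishing condition. You build a $2$-periodic exact sequence of projective $R/xR$-modules around the $(m-1)$-st syzygy $K_{m-1}$ and assert that splicing it yields a \emph{complete} projective resolution, hence that $K_{m-1}$ is Gorenstein projective. But a complete projective resolution (Definition \ref{def1}) requires more than exactness: the spliced sequence must stay exact after applying $\Hom_{R/xR}(-,Q)$ for every projective $Q$, i.e.\ one needs $\Ext^{i}_{R/xR}(K_{m-1},Q)=0$ for all $i\ge 1$. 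That is precisely the kind of vanishing you are trying to establish, so as written the step is circular (or at best unproved): an exact sequence of projectives having a given module as an image does not by itself make that module Gorenstein projective. Dimension-shifting along your periodic sequence only yields $\Ext^{j}_{R/xR}(K_{m-1},Q)\cong\Ext^{j-2}_{R/xR}(K_{m-1},Q)$ for $j\ge 3$, which reduces everything to $\Ext^{1}$ and $\Ext^{2}$ but does not kill them.

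The missing ingredient is supplied in the paper by Rees's theorem: for a projective $R/xR$-module of the form $L/xL$ with $L$ a free $R$-module, one has $\Ext^{i}_{R/xR}(M,L/xL)\cong\Ext^{i+1}_{R}(M,L)$, which vanishes for $i+1>m=\gldim(R)$, i.e.\ for all $i>m-1$. This verifies the $\Ext$ condition of Definition \ref{DefnSGproj} directly for $M$, with no detour through Gorenstein projectivity of a syzygy. If you want to keep your route, apply the same Rees isomorphism to $K_{m-1}$ (using your correct bound $\pd_R(K_{m-1})\le 1$) to get $\Ext^{j}_{R/xR}(K_{m-1},L/xL)\cong\Ext^{j+1}_R(K_{m-1},L)=0$ for $j\ge1$; only then is your periodic sequence a genuine complete projective resolution. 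The remaining points of your write-up (the $x$-regularity checks, the syzygy estimate, and the deduction of the $\Ext$-vanishing for $M$ from $\Gpd_{R/xR}(M)\le m-1$) are sound. A cosmetic remark: the first syzygy of $K_{m-1}$ over $R$ is only projective, not free, so the middle terms of your second periodic sequence should be taken projective rather than free.
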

\begin{proof}  Let $M$ be an $ R /xR $-module. Then, using the canonical
surjection of rings $R\rightarrow R /xR $, $M$ is an $R$-module.
Thus, there exists an exact sequence of $R$-modules,
$$0\rightarrow Q\rightarrow P\rightarrow M\rightarrow 0,$$ where $P$ is projective and  $\pd_R(Q)\leq
m-1$. Tensoring this sequence by $R /xR $, we get:
$$ \Tor_1^R(R/xR,P)=0  \rightarrow\Tor_1^R(R/xR,M) \rightarrow Q/xQ\rightarrow P/x P\rightarrow M \rightarrow 0.$$
From \cite[Examples (1), p. 102]{Bou}, $\Tor_1^R(R/xR,M)=M$ since
$xM=0$, and so we get an exact  of $R/xR$-modules of the form:
$$ 0  \rightarrow M \rightarrow Q/xQ\rightarrow P/x P\rightarrow M \rightarrow 0.$$
The $R/xR$-module  $P/x P$ is projective, and since $Q$ is a
submodule of the projective $R$-module $P$, $x$ is also a non-zero
divisor element on $Q$, then $\pd_{R/xR}(Q/xQ)\leq\pd_R(Q)\leq
m-1$ (from \cite[Theorem 4.3.5]{Wei}). Then, to see that $M$ is
$(2,m-1)$-SG-projective, it remains to show that
$\Ext_{R/xR}^{i}(M,F)$ for every projective $R/xR$-module $F$ and
for every $i\geq m$. For that, it suffices to consider $F$ to be a
free $R/xR$-module. In that case $F$ is of the form $L/xL$, where
$L$ is a free $R$-module. From Rees's theorem and since  $\gldim(R
)=m$, we have for every $i\geq m$:
$$  \Ext_{R/xR}^{i}(M,F ) \cong  \Ext_{R/xR}^{i}(M,L/xL) \cong \Ext_{R}^{i+1}(M,L)=0.$$
This completes the proof.\end{proof}

\begin{example}\label{exm-2m}
Consider the the quotient ring $R=\Z/8\Z$, where   $\Z$ denotes
the ring of integers, and consider a family of rings $S_i$, for
$i\geq 0$, such that $\lgldim(S_i)=i$. Then:
\begin{itemize}
    \item the direct product of
rings $R\times S_0$ is left $(2,0)$-SG which is not left
$(1,0)$-SG, and
    \item for $i\geq 1$, the direct product of
rings  $R\times S_i$ is left $(2,i)$-SG, but it is neither  left
$(1,i )$-SG nor  $(2,i-1)$-SG.
\end{itemize}

\end{example}
\begin{proof}  From Proposition \ref{prop-2m},  $R$ is $(2,0)$-SG, and,
by Lemma \cite[Corollary 3.10]{BMO}, $R$ is not $(1,0)$-SG. Then,
from Proposition \ref{prop-prod},  $R\times S_0$ is left
$(2,0)$-SG, but it is not left $(1,0)$-SG.\\
Using Lemma \ref{lem-prod2}, the same argument as above shows, for
$i\geq 1$, that $R\times S_i$ is left $(2,i)$-SG, but it is
neither left $(1,i )$-SG nor  $(2,i-1)$-SG.\end{proof}

%%%%%%%%%%%%%%%%%%%%%%%%%%%%%%%%%%%%%%%%%%%%%%%%%%%%%%%%%%%%%%%%
%%%%%%%%%%%%%%%%%%%%%%%%%%%%%%%%%%%%%%%%%%%%%%%%%%%%%%%%%%%%%%%%
%\begin{acknowledgement}
%The author would like to thank the referees for their helpful
%comments.
%\end{acknowledgement}

%%%%%%%%%%%%%%%%%%%%%%%%%%%%%%%%%%%%%%%%%%%%%%%%%%%%%%%%%%%%%%%%%%%%%%%%%%%%%%%%%%%%%%%%%%%%%%%%%%%%%%%%%%%%%%%%%%%%%%%%%%%%%%%%
%%%%%%%%%%%%%%%%%%%%%%%%%%%%%%%%%%%%%%%%%%%%%%%%%%%%%%%%%%%%%%%%%%%%%%%%%%%%%%%%%%%%%%%%%%%%%%%%%%%%%%%%%%%%%%%%%%%%%%%%%%%%%%%%
%%%%%%%%%%%%%%%%%%%%%%%%%%%%%%%%%%%%%%%%%%%%%%%%%%%%%%%%%%%%%%%%%%%%%%%%%%%%%%%%%%%%%%%%%%%%%%%%%%%%%%%%%%%%%%%%%%%%%%%%%%%%%%%%
%%%%%%%%%%%%%%%%%%%%%%%%%%%%%%%%%%%%%%%%%%%%%%%%%%%%%%%%%%%%%%%%%%%%%%%%%%%%%%%%%%%%%%%%%%%%%%%%%%%%%%%%%%%%%%%%%%%%%%%%%%%%%%%%

%%%%%%%%%%%%%%%%%%%%%%%%%%%%%%%%%%%%%%%%%%%%%%%%%%%%%%%%%%%%%%%%%%%%%%%%%%%%%%%%%%%%%%%%%%%%%%%%%%%%%%%%%%%%%%%%%%%%%%%%%%%%%%%%
%%%%%%%%%%%%%%%%%%%%%%%%%%%%%%%%%%%%%%%%%%%%%%%%%%%%%%%%%%%%%%%%%%%%%%%%%%%%%%%%%%%%%%%%%%%%%%%%%%%%%%%%%%%%%%%%%%%%%%%%%%%%%%%%
%%%%%%%%%%%%%%%%%%%%%%%%%%%%%%%%%%%%%%%%%%%%%%%%%%%%%%%%%%%%%%%%%%%%%%%%%%%%%%%%%%%%%%%%%%%%%%%%%%%%%%%%%%%%%%%%%%%%%%%%%%%%%%%%
%%%%%%%%%%%%%%%%%%%%%%%%%%%%%%%%%%%%%%%%%%%%%%%%%%%%%%%%%%%%%%%%%%%%%%%%%%%%%%%%%%%%%%%%%%%%%%%%%%%%%%%%%%%%%%%%%%%%%%%%%%%%%%%%
\end{document}